
\documentclass{article}
\usepackage{amsfonts}
\usepackage{amsmath}

\setcounter{MaxMatrixCols}{10}

\newtheorem{theorem}{Theorem}

\newtheorem{conjecture}[theorem]{Conjecture}

\newtheorem{definition}[theorem]{Definition}

\newenvironment{proof}[1][Proof]{\noindent\textbf{#1.} }{\ \rule{0.5em}{0.5em}}
\input{tcilatex}
\begin{document}

\title{Global Dynamical Behaviours and Periodicity of a Certain
Quadratic-Rational Difference Equation with Delay}
\author{Erkan Ta\c{s}demir$^{1}$, Melih G\"{o}cen$^{2}$, Y\"{u}ksel Soykan$%
^{2}$ \\
$^{1}$K\i rklareli University, P\i narhisar Vocational School, \\
39300, K\i rklareli, Turkey\\
Corresponding author mail: erkantasdemir@hotmail.com\\
$^{2}$Zonguldak Bulent Ecevit University, Department of Mathematics, \\
Art and Science Faculty, 67100, Zonguldak, Turkey. }
\maketitle

\textbf{Abstract}

Our aim in this paper is to deal with the dynamics of following higher order
difference equation%
\begin{equation*}
x_{n+1}=A+B\frac{x_{n-m}}{x_{n}^{2}}
\end{equation*}%
where $A,B>0$, and initial values are positive, and $m=\{1,2,...\}$\textbf{.}
Furthermore, we discuss the periodicity, boundedness, semi-cycles, global
asymptotic stability of solutions of these equations. We also handle the
rate of convergence of solutions of these difference equations.

\textbf{Keywords:} Difference equations, periodicity, boundedness,
semicycle, global asymptotic stability, rate of convergence.

\textbf{AMS Subject Classification:} 39A10, 39A23, 39A30.

\section{Introduction}

Last few decades, rational difference equations and their systems have
attracted the interest of many researchers for varied reasons. One of the
reason of this rapid growth of interest is, these equations provided a
natural description of many discrete mathematical models. Such discrete
mathematical models are often scrutinized in various fields of science and
technology for instance, biology, ecology, physiology, physics, engineering,
economics, probability theory, genetics, psychology, resource management and
population dynamics. We believe that the interest of studying rational
difference equations will increase in future years as more fascinating and
intresting results are obtaining. Although difference equations are very
simple in form, it is extremely difficult to understand thoroughly the
behaviors of their solutions. Furthermore, higher-order rational difference
equations and systems of rational equations have also been widely studied
but still have many aspects to be investigated. There are many papers
related to the rational difference equations and higher-order rational
difference equations for example, see [\cite{gocen2018}, \cite{Huristic}, 
\cite{Abualrub}].

In \cite{amleh1999}, Amleh et al. discussed the stability, boundedness and
periodic character of solutions of difference equation 
\begin{equation*}
x_{n+1}=\alpha +\frac{x_{n-1}}{x_{n}},
\end{equation*}%
where the initial values are positive numbers, and $\alpha \geq 0$.

In \cite{devault2003}, Devault et al. studied periodicity, global stability
and the boundedness of solutions of the following higher order difference
equation%
\begin{equation*}
x_{n+1}=p+\frac{x_{n-k}}{x_{n}},
\end{equation*}%
where the initial conditions are positive numbers, and $p>0$.

In \cite{saleh2005}, Saleh et al. handled the dynamical behaviours of
following higher order difference equation%
\begin{equation}
y_{n+1}=A+\frac{y_{n-k}}{y_{n}},  \label{saris526f}
\end{equation}%
with $k\in \{2,3,\cdots \}$ and $A$ is positive. The authors especially
discussed the global asymptotic stability, semi-cycle analysis and
periodicity of the unique positive equilibrium of \ Eq.(\ref{saris526f}).

In \cite{abusaris2003}, Abu-Saris et al. dealt with the global asymptotic
stability of positive equilibrium point of difference equations%
\begin{equation}
y_{n+1}=A+\frac{y_{n}}{y_{n-k}},  \label{sariseq6n5}
\end{equation}%
where $k\in \{2,3,\cdots \}$ and $A$ is positive. Moreover, in \cite%
{saleh2006b}, Saleh et al. studied the global stability of the negative
equilibrium of the difference equation (\ref{sariseq6n5}) where $k\in
\{1,2,\cdots \}$ and $A<0$.

In \cite{hamzamorsy2007}, Hamza et al. discussed the dynamics of following
difference equation%
\begin{equation}
x_{n+1}=\alpha +\frac{x_{n-1}}{x_{n}^{k}},  \label{hamza5247}
\end{equation}%
where $\alpha $, $k$ and the initial values are positive real numbers. The
authors dealt with the boundedness, oscillation behaviours and stability
analysis of unique equilibrium point of Eq.(\ref{hamza5247}).

In \cite{yalcinkaya2008}, Yal\c{c}\i nkaya handled the oscillation
behaviours, bounded solutions, periodic solutions and global stability of
solutions of difference equation 
\begin{equation*}
x_{n+1}=\alpha +\frac{x_{n-m}}{x_{n}^{k}},
\end{equation*}%
where the initial values are positive real numbers and $\alpha ,k>0$.

In \cite{Stevic2002}, Stevic investigated the dynamical properties of
difference equation%
\begin{equation*}
x_{n+1}=\frac{x_{n-1}}{g\left( x_{n}\right) },
\end{equation*}%
where $x_{-1},x_{0}>0$.

In \cite{beso2020}, Be\v{s}o et al. showed the Neimark--Sacker bifurcation,
boundedness and global attractivity of following difference equation 
\begin{equation*}
x_{n+1}=\gamma +\delta \frac{x_{n}}{x_{n-1}^{2}},
\end{equation*}%
with the initial conditions and $\gamma ,\delta $ are positive real numbers.

In \cite{Tasdemir}, Ta\c{s}demir investigated the boundedness, rate of
convergence, global asymptotic stability and periodicity of the following
higher order difference equations 
\begin{equation}
x_{n+1}=A+B\frac{x_{n}}{x_{n-m}^{2}},  \label{eqneqn1111}
\end{equation}%
where the initial conditions and $A,$ $B$ are positive real numbers and $%
m\in \left\{ 2,3,\cdots \right\} $.

Our aim in this work is to deal with the dynamics of following higher order
difference equation%
\begin{equation}
x_{n+1}=A+B\frac{x_{n-m}}{x_{n}^{2}},  \label{erk}
\end{equation}%
with $m=\{1,2,...\}$, and the initial conditions are positive numbers, and $%
A,B>0$\textbf{. }We first handle the periodicity, boundedness and
oscillation behaviors of solutions of Eq.(\ref{erk}). Moreover, we analyze
the local and global asymptotic stability of the solutions of Eq.(\ref{erk}%
). Finally, we study the rate of convergence of Eq.(\ref{erk}) and we
present some numerical examples to verify our theoretical results.

Here, we summarize the significant results and definitions on the theory of
difference equations. For more information, see \cite{Elaydi1996}, \cite%
{Kulenovic2002}, \cite{Camouzis2008} and references therein.

Let $I$ be some interval of real numbers and let $f:I^{k+1}\rightarrow I$ be
a continuously differentiable function. A difference equation of order $%
(k+1) $ is an equation of the form%
\begin{equation}
x_{n+1}=f(x_{n},x_{n-1},\cdots ,x_{n-k}),\text{ \ \ \ }n=0,1,\cdots .
\label{ana}
\end{equation}

A solution of Eq.(\ref{ana}) is a sequence $\{x_{n}\}_{n=-k}^{\infty }$ that
satisfies Eq.(\ref{ana}) for all $n\geq -k$.

Suppose that the function $f$ is continuously differentiable in some open
neighborhood of an equilibrium point $\overline{x}.$ Let%
\begin{equation*}
q_{i}=\frac{\partial f}{\partial u_{i}}(\overline{x},\overline{x},\cdots ,%
\overline{x}),\text{ \ for }i=0,1,\cdots ,k
\end{equation*}%
denote the partial derivative of $f(u_{0},u_{1},\cdots ,u_{k})$ with respect
to $u_{i}$ evaluated at the equilibrium point $\overline{x}$ of Eq.(\ref{ana}%
)

\begin{definition}
The equation%
\begin{equation}
z_{n+1}=q_{0}z_{n}+q_{1}z_{n-1}+\cdots +q_{k}z_{n-k},\text{ }n=0,1,\cdots ,
\label{abc}
\end{equation}%
is called the linearized equation of Eq.(\ref{ana}) about the equilibrium
point $\overline{x}$.
\end{definition}

\begin{theorem}[Clark's Theorem]
\label{ann} Consider Eq.(\ref{abc}). Then, 
\begin{equation*}
\sum_{i=0}^{k}\left\vert q_{i}\right\vert <1.
\end{equation*}%
is a sufficient condition for the locally asymptotically stability of Eq.(%
\ref{ana}).
\end{theorem}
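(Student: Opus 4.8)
The plan is to reduce the claim to a statement about the roots of the characteristic polynomial of the linearized equation \ref{abc} and then to bound those roots directly. First I would invoke the principle of linearized stability (see \cite{Kulenovic2002}, \cite{Elaydi1996}): the equilibrium $\overline{x}$ of Eq.(\ref{ana}) is locally asymptotically stable provided every root of the characteristic equation
\begin{equation*}
P(\lambda )=\lambda ^{k+1}-q_{0}\lambda ^{k}-q_{1}\lambda ^{k-1}-\cdots -q_{k}=0
\end{equation*}
associated with Eq.(\ref{abc}) lies strictly inside the open unit disk $\left\vert \lambda \right\vert <1$. Thus it suffices to show that the hypothesis $\sum_{i=0}^{k}\left\vert q_{i}\right\vert <1$ forces all roots of $P$ to satisfy $\left\vert \lambda \right\vert <1$.

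For the second step I would argue by contradiction. Suppose $\lambda $ is a root of $P$ with $\left\vert \lambda \right\vert \geq 1$. Writing $\lambda ^{k+1}=\sum_{i=0}^{k}q_{i}\lambda ^{k-i}$ and taking absolute values,
\begin{equation*}
\left\vert \lambda \right\vert ^{k+1}\leq \sum_{i=0}^{k}\left\vert q_{i}\right\vert \left\vert \lambda \right\vert ^{k-i}\leq \left\vert \lambda \right\vert ^{k}\sum_{i=0}^{k}\left\vert q_{i}\right\vert <\left\vert \lambda \right\vert ^{k},
\end{equation*}
where the middle inequality uses $\left\vert \lambda \right\vert \geq 1$ (so that $\left\vert \lambda \right\vert ^{k-i}\leq \left\vert \lambda \right\vert ^{k}$ for $0\leq i\leq k$) and the last inequality uses the hypothesis. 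Dividing by $\left\vert \lambda \right\vert ^{k}>0$ yields $\left\vert \lambda \right\vert <1$, contradicting $\left\vert \lambda \right\vert \geq 1$. Hence no root lies on or outside the unit circle, i.e. every root of $P$ satisfies $\left\vert \lambda \right\vert <1$, and the conclusion follows from the first step. Equivalently, one could compare $P(\lambda )$ with $\lambda ^{k+1}$ on $\left\vert \lambda \right\vert =1$ by Rouch\'{e}'s theorem, since there $\left\vert \sum_{i=0}^{k}q_{i}\lambda ^{k-i}\right\vert \leq \sum_{i=0}^{k}\left\vert q_{i}\right\vert <1=\left\vert \lambda ^{k+1}\right\vert $, which shows that $P$ has all $k+1$ of its zeros inside the unit disk.

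The only genuinely delicate point is the first step: one must appeal to the standard linearized stability theorem for higher-order difference equations, which guarantees that strict inclusion of all characteristic roots in the open unit disk implies local asymptotic stability of the nonlinear equation Eq.(\ref{ana}). Since the hypothesis $\sum_{i=0}^{k}\left\vert q_{i}\right\vert <1$ produces the strict bound $\left\vert \lambda \right\vert <1$ with no root on the unit circle, the equilibrium is hyperbolic and the linearization theorem applies with no borderline complications; everything else reduces to the short estimate displayed above.
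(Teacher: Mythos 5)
The paper itself offers no proof of this statement: it is quoted as Clark's Theorem, a classical result taken from the standard references (\cite{Kulenovic2002}, \cite{Elaydi1996}), so there is no in-paper argument to compare against. Your proof is correct and is precisely the standard one: the triangle-inequality estimate (equivalently, Rouch\'{e}'s theorem on $\left\vert \lambda \right\vert =1$) shows that $\sum_{i=0}^{k}\left\vert q_{i}\right\vert <1$ forces every root of $\lambda ^{k+1}-q_{0}\lambda ^{k}-\cdots -q_{k}$ into the open unit disk, and the linearized stability theorem for Eq.(\ref{ana}) then yields local asymptotic stability. You correctly flag the one nontrivial input, namely the linearization theorem, and since no root lies on the unit circle the hyperbolic case of that theorem applies without any borderline issues; I see no gap.
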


\ Consider the scalar $k$th-order linear difference equation%
\begin{equation}
x_{n+k}+p_{1}(n)x_{n+k-1}+\cdots +p_{k}(n)x_{n}=0,  \label{rateofconv5ncy}
\end{equation}%
where $k$ is a positive integer and $p_{i}:\mathbb{Z}^{+}\rightarrow \mathbb{%
C}$ for $i=1,\cdots ,k$. Assume that%
\begin{equation}
q_{i}=\underset{k\rightarrow \infty }{\lim }p_{i}(n),i=1,\cdots ,k,
\label{rateoflimit}
\end{equation}%
exist in $\mathbb{C}$. Consider the limiting equation of (\ref%
{rateofconv5ncy}):%
\begin{equation}
x_{n+k}+q_{1}x_{n+k-1}+\cdots +q_{k}x_{n}=0.  \label{limitingequa6gn6}
\end{equation}

\begin{theorem}[Poincar\'{e}'s Theorem]
Consider (\ref{rateofconv5ncy}) subject to condition (\ref{rateoflimit}).
Let $\lambda _{1},\cdots ,\lambda _{k}$ be the roots of the characteristic
equation%
\begin{equation}
\lambda ^{k}+q_{1}\lambda ^{k-1}+\cdots +q_{k}=0  \label{poincarevn7dnr6}
\end{equation}%
of the limiting equation (\ref{limitingequa6gn6}) and suppose that $%
\left\vert \lambda _{i}\right\vert \neq \left\vert \lambda _{j}\right\vert $
for $i\neq j$. If $x_{n}$ is a solution of (\ref{rateofconv5ncy}), then
either $x_{n}=0$ for all large $n$ or there exists an index $j\in \{1,\cdots
,k\}$ such that%
\begin{equation*}
\underset{n\rightarrow \infty }{\lim }\frac{x_{n+1}}{x_{n}}=\lambda _{j}.
\end{equation*}
\end{theorem}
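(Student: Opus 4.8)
The plan is to follow the classical route: reduce the scalar equation (\ref{rateofconv5ncy}) to a first-order linear vectorial recursion, and then analyse that recursion as a vanishing perturbation of a diagonalizable linear map. Setting $X_n=(x_n,x_{n+1},\dots ,x_{n+k-1})^{T}$, equation (\ref{rateofconv5ncy}) becomes $X_{n+1}=\bigl(C+E(n)\bigr)X_n$, where $C$ is the companion matrix of the limiting equation (\ref{limitingequa6gn6}) and $E(n)$ is the matrix whose only nonzero entries are $q_i-p_i(n)$ in the last row, so that $E(n)\to 0$ by (\ref{rateoflimit}). The characteristic polynomial of $C$ is exactly (\ref{poincarevn7dnr6}), hence its eigenvalues are $\lambda _1,\dots ,\lambda _k$; since these have pairwise distinct moduli they are in particular distinct, so $C$ is diagonalizable, $C=PDP^{-1}$ with $D=\operatorname{diag}(\lambda _1,\dots ,\lambda _k)$, and one may take $P$ to be the Vandermonde-type matrix of eigenvectors $(1,\lambda _j,\dots ,\lambda _j^{k-1})^{T}$, whose first row consists of $1$'s. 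Passing to $Y_n=P^{-1}X_n$ turns the system into $Y_{n+1}=\bigl(D+F(n)\bigr)Y_n$ with $F(n)=P^{-1}E(n)P\to 0$.

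After relabelling we may assume $|\lambda _1|>|\lambda _2|>\cdots >|\lambda _k|$. The core of the argument is the following perturbation statement: for each index $j$, the system $Y_{n+1}=(D+F(n))Y_n$ admits a solution $Y_n^{(j)}$, nonzero for all large $n$, with $Y_{n+1}^{(j)}-\lambda _j Y_n^{(j)}=o\bigl(\Vert Y_n^{(j)}\Vert \bigr)$; that is, the ratio of consecutive iterates converges to $\lambda _j$. This is established by constructing, through a contraction-mapping argument in an appropriately weighted Banach space of sequences, a one-dimensional $F$-invariant direction asymptotically aligned with the $j$-th coordinate axis; the spectral gaps $|\lambda _j|-|\lambda _{j+1}|>0$ supply both the contraction constant and the quantitative control that forces convergence of the ratio itself, rather than merely of $|Y_n|^{1/n}$ (the case in which some $\lambda _j=0$, possible only for $j=k$, is handled by the convention that the corresponding mode produces solutions decaying to $0$ faster than any geometric rate). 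The $k$ solutions $Y^{(1)},\dots ,Y^{(k)}$ obtained this way are linearly independent, hence a fundamental system.

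Now let $x_n$ be an arbitrary solution of (\ref{rateofconv5ncy}) and write its vector form as $Y_n=\sum _{j=1}^{k}c_j Y_n^{(j)}$. If all $c_j=0$, then $Y_n\equiv 0$, hence $x_n=0$ for all large $n$, which is the first alternative. Otherwise put $j^{*}=\min \{\,j:c_j\neq 0\,\}$. Since $|\lambda _{j^{*}}|$ strictly exceeds every $|\lambda _j|$ with $j>j^{*}$, and the modes with $j<j^{*}$ are absent, the term $c_{j^{*}}Y_n^{(j^{*})}$ dominates $Y_n$ componentwise; combined with the fact that the first row of $P$ has no zero entry, a direct estimate gives $\lim _{n\to \infty }\dfrac{x_{n+1}}{x_n}=\lim _{n\to \infty }\dfrac{(X_{n+1})_1}{(X_n)_1}=\lambda _{j^{*}}$, which is the second alternative with $j=j^{*}$.

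The main obstacle is precisely the perturbation statement of the second paragraph: producing, for each $j$, the distinguished solution whose successive-term ratio converges exactly to $\lambda _j$. The fixed-point construction and the careful bookkeeping of error terms — keeping quantitative estimates through the gaps between consecutive moduli — are where essentially all the work lies, and this is exactly the place where the hypothesis $|\lambda _i|\neq |\lambda _j|$ for $i\neq j$ is indispensable; it is also what upgrades Perron's weaker $n$-th-root conclusion to Poincar\'{e}'s ratio conclusion. A complete treatment along these lines can be found in \cite{Elaydi1996}.
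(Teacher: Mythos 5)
The paper itself offers no proof of this statement: it is quoted as a classical background result (Poincar\'{e}'s theorem, with the Perron/Pituk refinements cited from \cite{pituk2002} and the textbook \cite{Elaydi1996}), so there is no in-paper argument to compare yours against. Your skeleton is the standard and correct one: the reduction to the companion form $X_{n+1}=(C+E(n))X_{n}$ with $E(n)\rightarrow 0$, the observation that pairwise distinct moduli force distinct eigenvalues and hence diagonalizability of $C$ via the Vandermonde matrix $P$ with first row of ones, and the dominant-mode argument in your final paragraph (which, since $x_{n}=(PY_{n})_{1}=\sum_{j}(Y_{n})_{j}$ and the $j^{*}$-component dominates, does correctly yield both that $x_{n}\neq 0$ eventually and that the ratio converges to $\lambda _{j^{*}}$).

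Nevertheless, as a proof your text has a genuine gap exactly where you yourself locate it: the perturbation lemma of your second paragraph --- that for each $j$ the system $Y_{n+1}=(D+F(n))Y_{n}$, with $F(n)$ merely tending to zero rather than being summable, possesses a solution whose consecutive-term ratio converges to $\lambda _{j}$ --- is described and then outsourced to \cite{Elaydi1996}. That lemma \emph{is} Poincar\'{e}'s theorem; everything surrounding it in your write-up is routine linear algebra, so until the contraction-mapping construction in the weighted sequence space is actually carried out (with the gaps $\left\vert \lambda _{j}\right\vert -\left\vert \lambda _{j+1}\right\vert$ supplying the contraction constant, as you indicate), what you have is an accurate road map rather than a proof. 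If you do write it out, two further points need care: first, the hypothesis gives only $p_{i}(n)\rightarrow q_{i}$, so any estimate relying on $\sum_{n}\Vert F(n)\Vert <\infty$ is unavailable and the argument must work with the sup of $\Vert F(n)\Vert$ over tails; second, the case $\lambda _{k}=0$ (when $q_{k}=0$) requires a separate treatment and cannot be dispatched by a ``convention,'' since the ratio statement $x_{n+1}/x_{n}\rightarrow 0$ must still be proved for the corresponding solutions.
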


The following results were obtained by Perron, and one of Perron's results
was improved by Pituk, see \cite{pituk2002}.

\begin{theorem}
\label{rateofpoinc54bf5}Suppose that (\ref{rateoflimit}) holds. If $x_{n}$
is a solution of (\ref{rateofconv5ncy}), then either $x_{n}=0$ eventually or%
\begin{equation*}
\underset{n\rightarrow \infty }{\lim }\sup \left( \left\vert x_{j}\left(
n\right) \right\vert \right) ^{1/n}=\lambda _{j}.
\end{equation*}%
where $\lambda _{1},\cdots ,\lambda _{k}$ are the (not necessarily distinct)
roots of the characteristic equation (\ref{poincarevn7dnr6}).
\end{theorem}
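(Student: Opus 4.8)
The plan is to convert the scalar $k$th-order equation (\ref{rateofconv5ncy}) into an equivalent first-order linear vector system and then to analyse the Perron (Lyapunov) exponent of its solutions. Setting $y_n=(x_n,x_{n+1},\dots ,x_{n+k-1})^{T}$, equation (\ref{rateofconv5ncy}) takes the form $y_{n+1}=(C+C_n)y_n$, where $C$ is the companion matrix of the limiting equation (\ref{limitingequa6gn6}), so that $\det(\lambda I-C)$ is precisely the characteristic polynomial (\ref{poincarevn7dnr6}) and the spectrum of $C$ is $\{\lambda_1,\dots ,\lambda_k\}$, while $C_n\to 0$ as $n\to\infty$ by (\ref{rateoflimit}). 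Since $|x_n|\le \|y_n\|\le \sum_{j=0}^{k-1}|x_{n+j}|$, it suffices to prove that every nonzero solution $y_n$ of the vector system satisfies $\limsup_{n\to\infty}\|y_n\|^{1/n}=|\lambda_j|$ for some $j$; the componentwise statement then follows by comparison.

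First I would establish the upper bound $\mu:=\limsup_{n\to\infty}\|y_n\|^{1/n}\le \rho(C)=\max_j|\lambda_j|$. For each $\varepsilon>0$ pick a vector norm on $\mathbb{C}^{k}$ in which the induced operator norm satisfies $\|C\|\le \rho(C)+\varepsilon$; because $C_n\to 0$ there is an $N$ with $\|C+C_n\|\le \rho(C)+2\varepsilon$ for all $n\ge N$, hence $\|y_n\|\le(\rho(C)+2\varepsilon)^{\,n-N}\|y_N\|$ and therefore $\mu\le \rho(C)+2\varepsilon$. Letting $\varepsilon\downarrow 0$ gives $\mu\le \rho(C)<\infty$, so $\mu$ is finite whenever $y_n$ is not eventually zero.

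The core of the argument is to upgrade this inequality to the exact identity $\mu=|\lambda_j|$ for some $j$. Write the distinct moduli of the eigenvalues as $r_1>r_2>\dots >r_p\ge 0$, so $\mu\le r_1$. Assuming $y_n$ is not eventually zero, suppose toward a contradiction that $\mu$ falls strictly between consecutive values, say $r_{s+1}<\mu<r_s$ (the case $0\le \mu<r_p$ being handled the same way). Factor the characteristic polynomial as $g(\lambda)h(\lambda)$, where $g$ collects the roots of modulus $\ge r_s$ and $h$ those of modulus $\le r_{s+1}$, and decompose $\mathbb{C}^{k}=E^{+}\oplus E^{-}$ into the corresponding $C$-invariant subspaces, on which $C$ has minimal modulus $\ge r_s$ (resp. spectral radius $\le r_{s+1}$). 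Using adapted norms on each block, one obtains for the perturbed system, since $C_n\to 0$, an approximate splitting of the dynamics in which any solution with a nonzero $E^{+}$-component grows at rate at least $r_s-\varepsilon$, while a solution whose $E^{+}$-component is eventually zero grows at rate at most $r_{s+1}+\varepsilon$. The first alternative forces $\mu\ge r_s-\varepsilon$ for every $\varepsilon>0$, contradicting $\mu<r_s$; the second forces $\mu\le r_{s+1}+\varepsilon$ for every $\varepsilon>0$, contradicting $\mu>r_{s+1}$. Hence $\mu\in\{r_1,\dots ,r_p\}$, and transferring back through $|x_n|\le\|y_n\|$ and the reverse bound completes the proof.

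The genuinely delicate step — and the reason this is Pituk's sharpening of Perron's classical theorem rather than an elementary exercise — is making the invariant-splitting/dichotomy estimates rigorous for a perturbation $C_n$ that merely tends to zero, rather than being summable as in Perron's and Coffman's earlier versions; vectors in $E^{+}$ and $E^{-}$ are no longer exactly decoupled, and one must track how the spectral gap $(r_{s+1},r_s)$ is eroded but not destroyed. For that step I would invoke the construction of \cite{pituk2002}.
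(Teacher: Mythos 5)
The paper does not prove this theorem at all: it is quoted as a known background result of Perron, as sharpened by Pituk, with the reader referred to \cite{pituk2002}. There is therefore no in-paper argument to compare yours against, and the only meaningful question is whether your sketch would stand on its own. Its architecture is the correct and standard one --- rewrite (\ref{rateofconv5ncy}) as a perturbed companion system $y_{n+1}=(C+C_{n})y_{n}$ with $C_{n}\rightarrow 0$, obtain the upper bound $\limsup \left\Vert y_{n}\right\Vert ^{1/n}\leq \rho (C)$ from an adapted norm, and then rule out the possibility that $\limsup \left\Vert y_{n}\right\Vert ^{1/n}$ lands strictly between two consecutive eigenvalue moduli by a spectral-gap dichotomy. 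The passage between the scalar and vector formulations via $\left\vert x_{n}\right\vert \leq \left\Vert y_{n}\right\Vert \leq \sum_{j=0}^{k-1}\left\vert x_{n+j}\right\vert $ is also handled correctly, since shifting the index does not change the $\limsup $ of $\left\vert x_{n}\right\vert ^{1/n}$.

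However, as you yourself concede in your last paragraph, the entire content of the theorem lives in the dichotomy step, and you discharge it by invoking ``the construction of \cite{pituk2002}'' --- which is precisely the source the theorem is being quoted from. When the perturbation $C_{n}$ is merely $o(1)$ rather than summable, the spectral subspaces $E^{+}$ and $E^{-}$ are continually mixed, so the assertion that any solution with a nonzero $E^{+}$-component grows at rate at least $r_{s}-\varepsilon $ cannot be read off from the unperturbed block structure; it requires the cone-invariance and comparison machinery that constitutes Pituk's actual proof. Your proposal is thus an accurate road map but not an independent proof; in the context of this paper that is no worse than what the authors do (they supply no proof), but it should be presented as a citation rather than as an argument. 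Two cosmetic points: the displayed conclusion should read $\left\vert \lambda _{j}\right\vert $ rather than $\lambda _{j}$ (the paper's own statement is garbled here, including the stray notation $x_{j}(n)$), and your remark that $\mu $ is finite whenever $y_{n}$ is not eventually zero is superfluous, since $\mu \leq \rho (C)$ holds unconditionally.
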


\begin{theorem}[See \protect\cite{bilgin2017}]
\label{thebound}Let $n\in N_{n_{0}}^{+}$ and $g\left( n,u,v\right) $ be a
nondecreasing function in $u$ and $v$ for any fixed $n$. Suppose that, for $%
n\geq n_{0}$, the inequalities%
\begin{eqnarray*}
y_{n+1} &\leq &g\left( n,y_{n},y_{n-1}\right) , \\
u_{n+1} &\geq &g\left( n,u_{n},u_{n-1}\right)
\end{eqnarray*}%
hold. Then%
\begin{eqnarray*}
y_{n_{0}-1} &\leq &u_{n_{0}-1}, \\
y_{n_{0}} &\leq &u_{n_{0}}
\end{eqnarray*}%
implies that 
\begin{equation*}
y_{n}\leq u_{n},n\geq n_{0}.
\end{equation*}
\end{theorem}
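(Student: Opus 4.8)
The plan is to argue by induction on $n$, exploiting the two-step nature of the recursive inequalities. The two hypotheses $y_{n_{0}-1}\leq u_{n_{0}-1}$ and $y_{n_{0}}\leq u_{n_{0}}$ serve as the base of the induction, and because $g$ depends on the two previous terms of the sequence, both of these comparisons are genuinely needed to launch the argument; this is exactly why the statement insists on two initial inequalities rather than one.

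For the inductive step, suppose that for some $n\geq n_{0}$ we have already established both $y_{n-1}\leq u_{n-1}$ and $y_{n}\leq u_{n}$. Using first the prescribed upper estimate for $y_{n+1}$, then the monotonicity of $g$ in each of its last two slots (applied with the inductive hypotheses $y_{n}\leq u_{n}$ and $y_{n-1}\leq u_{n-1}$, passing through the intermediate value $g(n,u_{n},y_{n-1})$), and finally the prescribed lower estimate for $u_{n+1}$, we obtain
\[
y_{n+1}\leq g(n,y_{n},y_{n-1})\leq g(n,u_{n},u_{n-1})\leq u_{n+1}.
\]
Hence $y_{n+1}\leq u_{n+1}$, and moreover the pair $(y_{n},y_{n+1})$ again dominates $(u_{n},u_{n+1})$ componentwise, so the induction hypothesis is restored at the next index and the argument propagates.

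Since the base step is precisely the given hypothesis at $n=n_{0}-1$ and $n=n_{0}$, the inductive step yields $y_{n}\leq u_{n}$ for every $n\geq n_{0}$, which is the claim. There is no substantive obstacle: the only points that require care are to carry the comparison at \emph{two} consecutive indices through the induction (a single-index hypothesis would not suffice to invoke the monotonicity of $g$ in its second spatial variable), and to apply the monotonicity of $g$ separately in $u$ and in $v$ when passing from $g(n,y_{n},y_{n-1})$ to $g(n,u_{n},u_{n-1})$.
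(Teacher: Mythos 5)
The paper does not actually prove this theorem: it is quoted from \cite{bilgin2017} as a known comparison result, so there is no internal proof to compare your argument against. Your two-step induction is the standard and correct proof: the base case is exactly the two given initial inequalities, and the chain $y_{n+1}\leq g(n,y_{n},y_{n-1})\leq g(n,u_{n},u_{n-1})\leq u_{n+1}$ (splitting the middle step through $g(n,u_{n},y_{n-1})$ and using monotonicity in each slot separately) is precisely what is needed; you also correctly identify why the comparison must be carried at two consecutive indices. One slip of wording only: near the end of the inductive step you say the pair $(y_{n},y_{n+1})$ ``dominates'' $(u_{n},u_{n+1})$ componentwise, when you mean it is \emph{dominated by} that pair; the displayed inequalities make the intended meaning clear, so this is cosmetic rather than a gap.
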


Firstly, we are in a position to study the dynamics of the higher order
difference equation (\ref{erk}). The Eq.(\ref{erk}) which by the change of
variables%
\begin{equation*}
y_{n}=\frac{x_{n}}{A},
\end{equation*}%
reduces to the following difference equation%
\begin{equation}
y_{n+1}=1+p\frac{y_{n-m}}{y_{n}^{2}},  \label{eqn2222}
\end{equation}%
where $p=\frac{B}{A^{2}}$. Henceforth, we consider the difference equation (%
\ref{eqn2222}). Note that Eq.(\ref{eqn2222}) has an unique positive
equilibrium point such that 
\begin{equation*}
\bar{y}=\frac{1+\sqrt{1+4p}}{2}.
\end{equation*}

\section{Boundedness of Solutions of Eq.(\protect\ref{eqn2222})}

Now, we handle the bounded solutions of Eq.(\ref{eqn2222}). We also find out
that Eq.(\ref{eqn2222}) has bounded solutions.

\begin{theorem}
\label{boundedsol712}Let $0<p<1$. Then, there exists the bounded solutions
of Eq.(\ref{eqn2222}) such as%
\begin{equation*}
1<y_{n}\leq \frac{1}{1-p}+\sqrt[m+1]{p^{n}}\sum_{j=1}^{m+1}c_{j}\left( e^{%
\frac{2\pi i}{m+1}\left( j-1\right) }\right) ^{n},
\end{equation*}%
where $c_{j},$ $j=1,2,\cdots ,m+1$, are arbitrary constants, and $n\in
\left\{ 0,1,2,\cdots \right\} $, and $\sqrt[m+1]{p}$ is one of the $(m+1)$th
roots of $p$.
\end{theorem}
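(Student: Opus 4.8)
The lower bound $y_n > 1$ is immediate from the equation: since $p>0$ and all iterates are positive, $y_{n+1} = 1 + p\,y_{n-m}/y_n^2 > 1$ for every $n$. So the substance of the theorem is the upper bound, and the idea is to manufacture a comparison (majorant) sequence whose recurrence is linear and solvable in closed form, then invoke Theorem \ref{thebound}. Because $y_n > 1$ forces $y_n^2 > y_n > 1$, we have
\begin{equation*}
y_{n+1} = 1 + p\,\frac{y_{n-m}}{y_n^2} < 1 + p\,y_{n-m}.
\end{equation*}
So I would let $u_n$ be the solution of the \emph{linear} delay recurrence $u_{n+1} = 1 + p\,u_{n-m}$ with the same (or dominating) initial data, and apply Theorem \ref{thebound} with $g(n,u,v) = 1 + p\,v$ — note $g$ is nondecreasing in both arguments — to conclude $y_n \le u_n$ for all $n$. (Strictly, Theorem \ref{thebound} as stated is for a two-step recurrence; one uses the obvious $(m+1)$-dimensional analogue, or reformulates in vector form, which is routine.)

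Next I would solve $u_{n+1} = 1 + p\,u_{n-m}$ explicitly. A particular (constant) solution is $u^\ast = 1/(1-p)$, valid precisely because $0<p<1$. The homogeneous part is $v_{n+1} = p\,v_{n-m}$, i.e. $v_{n+1} - p\,v_{n-m} = 0$, whose characteristic polynomial is $\lambda^{m+1} - p = 0$. Its roots are the $(m+1)$st roots of $p$, namely $\sqrt[m+1]{p}\,e^{2\pi i (j-1)/(m+1)}$ for $j = 1,\dots,m+1$, and these are distinct, so the general homogeneous solution is $\sum_{j=1}^{m+1} c_j\big(\sqrt[m+1]{p}\,e^{2\pi i(j-1)/(m+1)}\big)^n = \sqrt[m+1]{p^n}\sum_{j=1}^{m+1} c_j\big(e^{2\pi i(j-1)/(m+1)}\big)^n$. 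Adding the particular solution gives exactly the claimed formula for $u_n$, and the $c_j$ are fixed by the initial conditions. Combining with $y_n \le u_n$ and $y_n > 1$ yields the stated chain of inequalities.

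The main obstacle — really the only delicate point — is matching the initial data so that the comparison theorem genuinely applies while keeping the stated formula honest. One must choose the arbitrary constants $c_j$ so that $u_j \ge y_j$ for the $m+1$ seed indices (and in fact the theorem is phrased with "$c_j$ arbitrary", so what is really being asserted is that \emph{for suitable} $c_j$ the bound holds); I would make this explicit by picking $c_j$ to interpolate initial values that dominate $y_0,\dots,y_m$. A secondary, purely bookkeeping issue is that the modulus $|\sqrt[m+1]{p}\,e^{2\pi i(j-1)/(m+1)}| = \sqrt[m+1]{p} < 1$ makes the homogeneous part decay, so $u_n \to 1/(1-p)$; this is worth remarking since it shows the bound is asymptotically $1 < y_n \lesssim 1/(1-p)$, consistent with the equilibrium $\bar y = (1+\sqrt{1+4p})/2$ lying in $(1, 1/(1-p))$ when $0<p<1$. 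Everything else — verifying $g$ is nondecreasing, checking the particular solution, confirming the roots are distinct — is routine.
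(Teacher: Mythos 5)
Your proposal is correct and follows essentially the same route as the paper: the lower bound $y_n>1$ by induction, the comparison $y_{n+1}\le 1+p\,y_{n-m}$ leading to the linear majorant $u_{n+1}=1+p\,u_{n-m}$ via Theorem \ref{thebound}, and the explicit solution as the particular solution $\frac{1}{1-p}$ plus the homogeneous part built from the $(m+1)$st roots of $p$. Your added remarks on matching the initial data (so that the ``arbitrary'' constants $c_j$ are actually chosen to dominate the seeds) and on the decay of the homogeneous part are welcome clarifications of points the paper leaves implicit, but they do not change the argument.
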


\begin{proof}
Let $p>0$, and $\left\{ y_{n}\right\} _{n=-m}^{\infty }$ be a positive
solution of Eq.(\ref{eqn2222}). Then, we obtain from Eq.(\ref{eqn2222})%
\begin{eqnarray*}
y_{1} &=&1+p\frac{y_{-m}}{y_{0}^{2}}>1, \\
y_{2} &=&1+p\frac{y_{1-m}}{y_{1}^{2}}>1.
\end{eqnarray*}%
Hence, by induction, we get $y_{n}>1$ for $n\geq 1$.

Now, we take care of the other side. We have from Eq.(\ref{eqn2222}) 
\begin{equation*}
y_{n+1}=1+p\frac{y_{n-m}}{y_{n}^{2}}\leq 1+py_{n-m}.
\end{equation*}%
According to Theorem \ref{thebound}, we consider a sequence $\left\{
u_{n}\right\} _{n=0}^{\infty }$, and $y_{n}\leq u_{n},$ $n=0,1,\cdots ,$ and 
\begin{equation}
u_{n+1}=1+pu_{n-m},n\geq 1,  \label{unequat84ng}
\end{equation}%
such that 
\begin{equation}
u_{s+i}=y_{s+i},s\in \left\{ -m,-m+1,\cdots \right\} ,i=\left\{ 0,1,2,\cdots
\right\} ,n\geq s.  \label{abdtrne4c2}
\end{equation}%
The characteristic polynomial to Eq.(\ref{unequat84ng}) is 
\begin{equation*}
P_{m+1}\left( \lambda \right) =\lambda ^{m+1}-p.
\end{equation*}%
Thus, we have the roots of characteristic polynomial as follows: 
\begin{equation*}
\lambda _{j}=\sqrt[m+1]{p}e^{\frac{2\pi i}{m+1}\left( j-1\right) },
\end{equation*}%
where $j=1,2,\cdots ,m+1.$ The homogeneous solution of Eq.(\ref{unequat84ng}%
) is 
\begin{equation*}
u_{h}=\sum_{j=1}^{m+1}c_{j}\sqrt[m+1]{p^{n}}\left( e^{\frac{2\pi i}{m+1}%
\left( j-1\right) }\right) ^{n},
\end{equation*}%
where $c_{j}$ are arbitrary constants for $j=1,2,\cdots ,m+1$. Now, we
handle the equilibrium solution of Eq.(\ref{unequat84ng}). From Eq.(\ref%
{unequat84ng}), we get that 
\begin{equation*}
\bar{u}=\frac{1}{1-p}.
\end{equation*}%
Therefore, the solution of Eq.(\ref{unequat84ng}) is 
\begin{equation}
u_{n}=\frac{1}{1-p}+\sum_{j=1}^{m+1}c_{j}\sqrt[m+1]{p^{n}}\left( e^{\frac{%
2\pi i}{m+1}\left( j-1\right) }\right) ^{n},  \label{solu7vnt}
\end{equation}%
where $c_{j}$ are arbitrary constants for $j=1,2,\cdots ,m+1$. Furthermore,
we have from (\ref{abdtrne4c2}) and (\ref{solu7vnt})%
\begin{equation*}
y_{n+1}-u_{n+1}\leq p\left( y_{n}-u_{n}\right) ,
\end{equation*}%
where $n>s,$ and $p\in \left( 0,1\right) $. Therefore, we get $y_{n}\leq
u_{n},$ $n>s$. So, the proof completed.
\end{proof}

\section{Oscillation Behaviors of Eq.(\protect\ref{eqn2222})}

In this section, we discuss the semi-cycles of Eq.(\ref{eqn2222}). We also
reveal the oscillation behaviours of solutions of Eq.(\ref{eqn2222}) in
detail.

\begin{theorem}
Let $\{y_{n}\}_{n=-m}^{\infty }$ be a positive solution of Eq.(\ref{eqn2222}%
). Then, the following statements are true:

(i) The every semi-cycle at most $m$ terms.

(ii) Every solution of Eq.(\ref{eqn2222}) oscillates about the positive
equilibrium $\bar{y}$.
\end{theorem}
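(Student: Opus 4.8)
The plan is to analyze the sign of the deviation $y_{n+1}-\bar y$ using the defining relation $y_{n+1}=1+p\,y_{n-m}/y_n^2$ together with the equilibrium identity $\bar y = 1 + p/\bar y$ (equivalently $\bar y^2-\bar y-p=0$), and to combine this with the bound $y_n>1$ for $n\ge 1$ established in the proof of Theorem~\ref{boundedsol712}.

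First I would record the useful algebraic fact. Since $\bar y=1+p/\bar y$, for any $n\ge 1$ we have
\begin{equation*}
y_{n+1}-\bar y = p\frac{y_{n-m}}{y_n^2}-\frac{p}{\bar y}
= \frac{p}{\bar y\,y_n^2}\bigl(\bar y\,y_{n-m}-y_n^2\bigr).
\end{equation*}
Because $p>0$ and $y_n^2>0$, the sign of $y_{n+1}-\bar y$ is exactly the sign of $\bar y\,y_{n-m}-y_n^2$. I would then argue the semi-cycle length claim~(i) by contradiction: suppose some semi-cycle had $m+1$ consecutive terms on the same side of $\bar y$, say $y_k,y_{k+1},\dots,y_{k+m}$ are all $\ge\bar y$ (the other case is symmetric). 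Look at the term $y_{k+m+1}$, whose sign of deviation is governed by $\bar y\,y_k - y_{k+m}^2$. Using $y_k\le$ something and $y_{k+m}\ge\bar y$ one wants to force $\bar y\,y_k - y_{k+m}^2<0$, i.e.\ $y_{k+m+1}<\bar y$, ending the semi-cycle at length $\le m$ — wait, more care is needed: a semi-cycle of length exactly $m+1$ has indices $n-m,\dots,n$ feeding into $y_{n+1}$, so I should set it up so that the $(m+1)$-st member's successor is forced to the opposite side, or equivalently show no semi-cycle can reach length $m+1$. The cleanest route: if $y_{n-m},\dots,y_n$ all lie in one semi-cycle (all $\ge\bar y$ or all $<\bar y$), then since $y_{n}\ge\bar y>1$ we get $y_n^2\ge \bar y\,y_n\ge\bar y\,y_{n-m}$ only if additionally $y_n\ge y_{n-m}$, which need not hold — so I expect the actual argument must instead use that within a \emph{single} semi-cycle above $\bar y$, boundedness from Theorem~\ref{boundedsol712} (or a cruder bound) caps $y_{n-m}$ while $y_n>\bar y$ is enough to make $y_n^2>\bar y\,y_{n-m}$. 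Pinning down exactly which inequality does the work is the main obstacle, and I would isolate it as a short lemma: \emph{if $y_j\ge\bar y$ for $j=n-m,\dots,n$ then $y_{n+1}<\bar y$}, proved via $y_n^2\ge\bar y^2 = \bar y(1+p/\bar y) = \bar y + p$ compared against $\bar y\,y_{n-m}$, possibly needing an auxiliary observation that after one step above $\bar y$ the subsequent terms are controlled.

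For part~(ii), once (i) is in hand, non-oscillation would mean the solution is eventually on one side of $\bar y$, i.e.\ it eventually forms a single infinite semi-cycle; but (i) says semi-cycles have at most $m$ terms, so this is impossible unless the solution equals $\bar y$ eventually, and $\bar y$ is the unique positive equilibrium so a solution hitting a run of $\bar y$'s of length $m+1$ would be constant $\equiv\bar y$ — excluded for a genuinely non-constant positive solution. Hence every non-equilibrium positive solution oscillates about $\bar y$; I would state (ii) with this understanding. The hard part throughout is the precise inequality chain in the semi-cycle bound; the rest is the standard "bounded semi-cycle length $\Rightarrow$ oscillation" deduction and should be routine.
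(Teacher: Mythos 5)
There is a genuine gap in part (i), and you have in fact flagged it yourself without closing it. Your sign identity
\begin{equation*}
y_{n+1}-\bar{y}=\frac{p}{\bar{y}\,y_{n}^{2}}\left( \bar{y}\,y_{n-m}-y_{n}^{2}\right)
\end{equation*}
is correct and equivalent to the paper's starting point, but you then examine the wrong term: you analyze $y_{k+m+1}$, whose deviation is governed by $\bar{y}\,y_{k}-y_{k+m}^{2}$ with \emph{both} $y_{k}$ and $y_{k+m}$ lying inside the putative long semi-cycle, and, as you yourself observe, that sign cannot be determined from $y_{k},y_{k+m}\geq \bar{y}$ alone. The idea you are missing is to step back one index and use the term \emph{preceding} the semi-cycle. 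If $y_{N}$ is the first term of a positive semi-cycle and $y_{N},\dots ,y_{N+m-1}>\bar{y}$, then $y_{N+m}=1+p\,y_{N-1}/y_{N+m-1}^{2}$, where now $y_{N-1}\leq \bar{y}$ because it belongs to the previous (negative) semi-cycle, while $y_{N+m-1}>\bar{y}$; hence
\begin{equation*}
y_{N+m}<1+p\frac{\bar{y}}{\bar{y}^{2}}=1+\frac{p}{\bar{y}}=\bar{y},
\end{equation*}
so the semi-cycle cannot acquire an $(m+1)$-st term. This is exactly the paper's argument, and it needs no boundedness input.

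Your proposed repairs would not rescue the argument. The auxiliary lemma you suggest (if $y_{j}\geq \bar{y}$ for $j=n-m,\dots ,n$ then $y_{n+1}<\bar{y}$) hypothesizes $m+1$ consecutive terms above $\bar{y}$, so even if it were provable it would only bound semi-cycles by $m+1$ terms, not $m$. Moreover, invoking Theorem \ref{boundedsol712} would restrict the conclusion to $0<p<1$, whereas the statement (and the paper's proof) holds for all $p>0$, and the bound there involves arbitrary constants $c_{j}$ and gives no comparison of $y_{n-m}$ with $\bar{y}$ in any case. Your deduction of (ii) from (i) is the standard one and agrees with the paper.
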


\begin{proof}
We firstly handle the positive semi-cycle of solution of Eq.(\ref{eqn2222}).
The negative semi-cycle is similar and can be omitted. Assume that Eq.(\ref%
{eqn2222}) has a positive semi-cycle with $m$ terms. Suppose that $y_{N}$ is
the first term in this positive semi-cycle. Therefore, we get 
\begin{equation*}
y_{N},y_{N+1},\cdots ,y_{N+m-1}>\bar{y}.
\end{equation*}%
Hence, we obtain from Eq.(\ref{eqn2222})%
\begin{equation*}
y_{N+m}=1+p\frac{y_{N-1}}{y_{N+m-1}^{2}}<1+p\frac{y_{N-1}}{\bar{y}^{2}}<\bar{%
y}.
\end{equation*}%
So, we have that a semi-cycle consists at most $m$ terms. We also get that
every solution of Eq.(\ref{eqn2222}) oscillates about $\bar{y}$. The proof
completed as desired.
\end{proof}

\begin{theorem}
Let $m$ be an odd number and let 
\begin{equation}
y_{-m},y_{-m+2},\cdots ,y_{-1}\leq \bar{y}\text{ and }y_{-m+1},y_{-m+3},%
\cdots ,y_{0}>\bar{y}.  \label{semicycle7gn6}
\end{equation}%
Then, every semi-cycle of Eq.(\ref{eqn2222}) has lenght one. Additionally,
the solution $\{y_{n}\}_{n=-m}^{\infty }$ of Eq.(\ref{eqn2222}) is
oscillatory about unique positive equilibrium point $\bar{y}$.
\end{theorem}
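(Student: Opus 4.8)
The plan is to prove that under the interleaving hypothesis (\ref{semicycle7gn6}) the solution alternates strictly about $\bar y$ from one index to the next, so every semi-cycle has length exactly one. I would proceed by induction on $n$, showing that for all $n\ge -m$ the pair of consecutive conditions $y_{n}\le\bar y$ when $n$ has the parity of $-m$ (i.e. $n-(-m)$ is even) and $y_{n}>\bar y$ otherwise is propagated by the recursion. Concretely, set up the inductive claim as: for every $k\ge 0$ we have $y_{2k-m}\le\bar y$, $y_{2k-m+2}\le\bar y,\dots$ and the complementary even-shifted terms $>\bar y$ — but it is cleaner to phrase it as the single statement ``$y_{n}\le\bar y \iff y_{n-1}>\bar y$ for all $n\ge -m+1$,'' with the base cases furnished directly by (\ref{semicycle7gn6}).

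The core computation is the induction step, and it relies on the key algebraic fact that $\bar y$ satisfies $\bar y = 1 + p/\bar y$, equivalently $\bar y^{2} = \bar y + p$, which follows from $\bar y=\tfrac{1+\sqrt{1+4p}}{2}$. Given the inductive hypothesis, to compute the position of $y_{n+1}=1+p\,y_{n-m}/y_{n}^{2}$ relative to $\bar y$ I would compare it to $1+p\,\bar y/\bar y^{2}=1+p/\bar y=\bar y$. Since $m$ is odd, the indices $n-m$ and $n$ have opposite parity, so exactly one of $y_{n-m},y_{n}$ lies at or below $\bar y$ and the other strictly above; I split into the two cases. If $y_{n}>\bar y$ and $y_{n-m}\le\bar y$, then $p\,y_{n-m}/y_{n}^{2}<p\,\bar y/\bar y^{2}$, hence $y_{n+1}<\bar y$; in the reverse case $y_{n}\le\bar y$ and $y_{n-m}>\bar y$ give $y_{n+1}>\bar y$. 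This simultaneously advances the hypothesis to index $n+1$ and shows that the parity–sign correspondence is maintained, so consecutive terms always straddle $\bar y$, i.e. every semi-cycle has length one and the solution oscillates about $\bar y$.

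The main obstacle — really the only delicate point — is bookkeeping the parities correctly when $m$ is odd: one must be sure that the hypothesis (\ref{semicycle7gn6}) pins down the sign of $y_{j}$ for every initial index $j\in\{-m,\dots,0\}$ (the two alternating chains together cover all of them) and that the step $n\mapsto n+1$ references $y_{n}$ and $y_{n-m}$ whose parities differ by $m$ (odd), so they are genuinely opposite. I would also remark that strictness is preserved: $y_{n+1}$ is never equal to $\bar y$ once $n\ge 0$, because the strict inequality on whichever of $y_{n},y_{n-m}$ lies above $\bar y$ forces a strict inequality for $y_{n+1}$; hence from index $1$ onward all inequalities are strict and the length-one claim is unambiguous. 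A short closing sentence would note that alternating signs of $y_{n}-\bar y$ is exactly the statement that the solution is oscillatory about $\bar y$, completing the proof.
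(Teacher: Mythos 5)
Your proposal is correct and follows essentially the same route as the paper: both compare $y_{n+1}=1+p\,y_{n-m}/y_{n}^{2}$ with $1+p\,\bar y/\bar y^{2}=\bar y$, use the oddness of $m$ to guarantee that $y_{n}$ and $y_{n-m}$ lie on opposite sides of $\bar y$, and propagate the alternation by induction on the parity of the index. The paper merely writes out the first three steps and asserts the inductive formulas for $y_{2n+1}<\bar y$ and $y_{2n}>\bar y$, whereas you spell out the induction step and the preservation of strict inequality; the content is the same.
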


\begin{proof}
Let $\{y_{n}\}_{n=-m}^{\infty }$ be a positive solution of Eq.(\ref{eqn2222}%
). Assume that (\ref{semicycle7gn6}) holds. Hence, we obtain from Eq.(\ref%
{eqn2222}),%
\begin{eqnarray*}
y_{1} &=&1+p\frac{y_{-m}}{y_{0}^{2}}<\bar{y}, \\
y_{2} &=&1+p\frac{y_{-m+1}}{y_{1}^{2}}>\bar{y}, \\
y_{3} &=&1+p\frac{y_{-m+2}}{y_{2}^{2}}<\bar{y}.
\end{eqnarray*}%
Therefore, we have by induction%
\begin{equation*}
y_{2n+1}=1+p\frac{y_{2n-m}}{y_{2n}^{2}}<\bar{y},
\end{equation*}%
and 
\begin{equation*}
y_{2n}=1+p\frac{y_{2n-(m+1)}}{y_{2n-1}^{2}}>\bar{y}.
\end{equation*}
\end{proof}

\section{Periodicity of Eq.(\protect\ref{eqn2222})}

Now, we study the existence of periodic solutions of Eq.(\ref{eqn2222}).

\begin{theorem}
Assume that $m$ is an even number. Then, Eq.(\ref{eqn2222}) has no two
periodic solution.
\end{theorem}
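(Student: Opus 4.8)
The plan is to argue by contradiction. Suppose Eq.~(\ref{eqn2222}) has a solution of period two (prime period two). A sequence of period two is completely determined by the parity of its index, so there are positive numbers $\phi \neq \psi$ with $y_n = \phi$ for every even $n$ and $y_n = \psi$ for every odd $n$ (the opposite labelling is treated in the same way). Positivity of $\phi$ and $\psi$ comes from the fact that every solution of Eq.~(\ref{eqn2222}) with positive initial data stays positive.

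Next I would substitute this parity-alternating form into the recurrence, using crucially that $m$ is even, so that for each $n$ the indices $n$ and $n-m$ have the same parity. Taking $n$ even gives $y_{n+1} = \psi$, $y_n = \phi$, $y_{n-m} = \phi$, hence $\psi = 1 + p\,\phi/\phi^{2} = 1 + p/\phi$; taking $n$ odd gives $\phi = 1 + p\,\psi/\psi^{2} = 1 + p/\psi$. Thus a period-two solution must satisfy the system $\psi = 1 + p/\phi$ and $\phi = 1 + p/\psi$.

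Then I would subtract the two equations: $\psi - \phi = p/\phi - p/\psi = p(\psi - \phi)/(\phi\psi)$, so $(\psi - \phi)\bigl(1 - p/(\phi\psi)\bigr) = 0$. Since $\phi \neq \psi$, this forces $\phi\psi = p$. Substituting $p/\psi = \phi$ back into $\phi = 1 + p/\psi$ yields $\phi = 1 + \phi$, i.e.\ $0 = 1$, a contradiction. Hence Eq.~(\ref{eqn2222}) admits no period-two solution.

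The computation itself is short, so there is no serious analytic obstacle; the only points that need care are (a) justifying that any period-two solution has the stated alternating form with both entries positive, and (b) correctly tracking the parities of $n$, $n+1$ and $n-m$ — it is exactly the evenness of $m$ that makes $y_{n-m}$ coincide with $y_n$ rather than with $y_{n+1}$, and this is precisely the feature that rules out nonconstant period-two solutions. If I wanted to be thorough I would also remark that the alternative labelling leads to the identical system by symmetry, so no generality is lost.
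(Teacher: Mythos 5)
Your proposal is correct and follows essentially the same route as the paper: write the period-two solution in parity-alternating form, use the evenness of $m$ to reduce the recurrence to the system $\psi=1+p/\phi$, $\phi=1+p/\psi$, subtract to force $\phi\psi=p$, and substitute back to reach a contradiction. The only cosmetic difference is the final step — the paper concludes $\beta=0$, contradicting positivity, while you obtain $0=1$ directly — and your explicit attention to the parity of $n-m$ is, if anything, slightly more careful than the paper's.
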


\begin{proof}
Let $m$ is an even number. We also suppose that Eq.(\ref{eqn2222}) has two
periodic solution such that 
\begin{equation*}
\cdots ,\alpha ,\beta ,\alpha ,\beta ,\cdots
\end{equation*}%
where $\alpha $ and $\beta $ are positive numbers and $\alpha \neq \beta $.
Hence, we get the followings%
\begin{eqnarray}
y_{2n+1} &=&1+p\frac{y_{2n-m}}{y_{2n}^{2}}\Rightarrow \alpha =1+\frac{p}{%
\beta },  \label{eqobm7per2} \\
y_{2n+2} &=&1+p\frac{y_{2n+1-m}}{y_{2n+1}^{2}}\Rightarrow \beta =1+\frac{p}{%
\alpha }.  \notag
\end{eqnarray}%
where $n\geq 1$. Therefore, we obtain that%
\begin{equation*}
\left( \alpha -\beta \right) \left( 1-\frac{p}{\alpha \beta }\right) =0.
\end{equation*}%
Now, from our supposition, we have \ $\alpha \neq \beta $. Thus, we get%
\begin{equation*}
1-\frac{p}{\alpha \beta }=0\Rightarrow p=\alpha \beta .
\end{equation*}%
From (\ref{eqobm7per2}), we have 
\begin{equation*}
\alpha \beta =\beta +p\Rightarrow \beta =0\text{.}
\end{equation*}
So, we have a contradiction. The proof completed as desired.
\end{proof}

\section{Stability of Solutions of Eq.(\protect\ref{eqn2222})}

In this section, we deal with the asymptotic stability of the solutions of
Eq.(\ref{erk}). \ Firstly, we find the linearized equation associated with
Eq.(\ref{eqn2222}) about its positive equilibrium point.

Let $I$ be some interval of real numbers and let 
\begin{equation*}
f:I^{m+1}\rightarrow I,
\end{equation*}%
be a continuously differentiable function such that $f$ is defined by 
\begin{equation*}
f\left( y_{n},y_{n-1},\cdots ,y_{n-m}\right) =1+p\frac{y_{n-m}}{y_{n}^{2}}%
\text{.}
\end{equation*}%
Thus, we obtain that%
\begin{equation*}
q_{0}=\frac{\partial f}{\partial y_{n}}=-\frac{2p}{\bar{y}^{2}},
\end{equation*}

\begin{equation*}
q_{1}=q_{2}=\cdots =q_{m-1}=0,
\end{equation*}

\begin{equation*}
q_{m}=\frac{\partial f}{\partial y_{n-m}}=\frac{p}{\bar{y}^{2}}.
\end{equation*}%
Then the linearized equation of Eq.(\ref{eqn2222}) about its unique positive
equilibrium point $\bar{y}$ is:%
\begin{equation}
z_{n+1}+\frac{2p}{\bar{y}^{2}}z_{n}-\frac{p}{\bar{y}^{2}}z_{n-m}=0.
\label{linearizedeqlastngy4}
\end{equation}%
Hence, the characteristic equation of Eq.(\ref{eqn2222}) is as follows, 
\begin{equation}
\lambda ^{m+1}+\frac{2p}{\bar{y}^{2}}\lambda ^{m}-\frac{p}{\bar{y}^{2}}=0.
\label{characteris5mby68d}
\end{equation}

\begin{theorem}
\label{localstablethe}The positive equilibirium $\overline{y}$ of Eq.(\ref%
{eqn2222}) is locally asymptotically stable when $p\in \left( 0,\frac{3}{4}%
\right) .$
\end{theorem}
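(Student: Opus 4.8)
The plan is to apply Clark's Theorem (Theorem \ref{ann}) to the linearized equation \eqref{linearizedeqlastngy4}, which has coefficients $q_0=-2p/\bar y^2$, $q_1=\cdots=q_{m-1}=0$, and $q_m=p/\bar y^2$. The sufficient condition for local asymptotic stability is $\sum_{i=0}^m|q_i|<1$, i.e.
\begin{equation*}
\frac{2p}{\bar y^2}+\frac{p}{\bar y^2}<1,\qquad\text{equivalently}\qquad 3p<\bar y^2 .
\end{equation*}
So the whole argument reduces to showing that the inequality $3p<\bar y^2$ holds precisely when $p\in(0,3/4)$, where $\bar y=\tfrac{1+\sqrt{1+4p}}{2}$.

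First I would substitute the explicit equilibrium value. Since $\bar y$ satisfies $\bar y^2=\bar y+p$ (from the equilibrium relation $\bar y = 1 + p/\bar y$ multiplied through by $\bar y$), the condition $3p<\bar y^2$ becomes $3p<\bar y+p$, that is $2p<\bar y$, or $\bar y>2p$. Plugging in $\bar y=\tfrac{1+\sqrt{1+4p}}{2}$ gives $1+\sqrt{1+4p}>4p$, i.e. $\sqrt{1+4p}>4p-1$. If $4p-1\le 0$ (that is $p\le 1/4$) this is automatic since the left side is positive; if $4p-1>0$ I would square both sides to get $1+4p>16p^2-8p+1$, which simplifies to $0>16p^2-12p=4p(4p-3)$, i.e. $p<3/4$. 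Combining the two cases yields exactly $p\in(0,3/4)$, as claimed.

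The only mild subtlety — and the step I would be most careful about — is the squaring step, which is valid only after confirming the sign of $4p-1$; handling the case $p\le 1/4$ separately (where no squaring is needed) takes care of this cleanly. There is also the trivial bookkeeping that $p=B/A^2>0$ so the lower endpoint $0$ is never attained, and that at $p=3/4$ one gets equality rather than strict inequality, so Clark's criterion no longer applies — hence the open interval. Everything else is routine algebra, and no genuine obstacle arises because Clark's Theorem does all the heavy lifting. I would close by stating that therefore $\sum_{i=0}^m|q_i|<1$ for $p\in(0,3/4)$, so by Theorem \ref{ann} the equilibrium $\bar y$ is locally asymptotically stable on that interval.
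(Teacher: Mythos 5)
Your proposal is correct and follows essentially the same route as the paper: apply Clark's Theorem to the linearized equation and reduce the claim to the inequality $3p/\bar{y}^{2}<1$, which both arguments then verify to be equivalent to $0<p<\tfrac{3}{4}$ by routine algebra (the paper rationalizes $p/\bar{y}^{2}$ explicitly and factors a quadratic in $\sqrt{4p+1}$, while you use the equilibrium relation $\bar{y}^{2}=\bar{y}+p$ and a careful case split before squaring). The two algebraic finishes are interchangeable, and your handling of the sign of $4p-1$ before squaring is sound.
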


\begin{proof}
From Theorem \ref{ann}, all roots of the characteristic equation of Eq.(\ref%
{linearizedeqlastngy4}) lie in an open disc $\left\vert \lambda \right\vert
<1$, if%
\begin{equation*}
\left\vert q_{0}\right\vert +\left\vert q_{1}\right\vert +\left\vert
q_{2}\right\vert +\cdots \left\vert q_{m}\right\vert <1\text{.}
\end{equation*}%
It follows from (\ref{characteris5mby68d}) that 
\begin{equation*}
\left\vert q_{0}\right\vert +\left\vert q_{1}\right\vert +\left\vert
q_{2}\right\vert +\cdots \left\vert q_{m}\right\vert =\frac{3p}{\bar{y}^{2}}.
\end{equation*}%
Note that 
\begin{equation*}
\frac{p}{\bar{y}^{2}}=\frac{2p+1-\sqrt{4p+1}}{2p}.
\end{equation*}%
With many numerical calculations, we get that 
\begin{eqnarray*}
\left\vert q_{0}\right\vert +\left\vert q_{1}\right\vert +\left\vert
q_{2}\right\vert +\cdots \left\vert q_{m}\right\vert &=&\frac{3p}{\bar{y}^{2}%
}<1, \\
\frac{3\left( 2p+1-\sqrt{4p+1}\right) }{2p} &<&1, \\
\frac{4p+3-3\sqrt{4p+1}}{2p} &<&0.
\end{eqnarray*}%
Hence, we obtain from $p>0$, 
\begin{equation*}
\left( \sqrt{4p+1}-1\right) \left( \sqrt{4p+1}-2\right) <0.
\end{equation*}%
Therefore, we get that $0<p<\frac{3}{4}$. And the proof is complete.
\end{proof}

\begin{theorem}
The equilibrium point $\bar{y}$ of Eq.(\ref{eqn2222}) is globally
asymptotically stable if $0<p<\frac{1}{2}$.
\end{theorem}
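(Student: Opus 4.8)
The strategy is to combine the local asymptotic stability already established in Theorem \ref{localstablethe} (valid on the larger range $(0,3/4)$, so in particular on $(0,1/2)$) with a global attractivity argument. Thus it suffices to prove that every positive solution $\{y_n\}$ of Eq.(\ref{eqn2222}) converges to $\bar y$. First I would fix a positive solution and recall from the proof of Theorem \ref{boundedsol712} that $y_n > 1$ for all $n \geq 1$. The plan is to set $L = \liminf_{n\to\infty} y_n$ and $S = \limsup_{n\to\infty} y_n$; since the solution is bounded (again by Theorem \ref{boundedsol712}, using $0<p<1$), both $L$ and $S$ are finite, and $1 \leq L \leq S$. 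The goal is to show $L = S$, whence the common value must equal the unique positive equilibrium $\bar y$.

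Next I would extract the key inequalities from Eq.(\ref{eqn2222}). Writing $y_{n+1} = 1 + p\, y_{n-m}/y_n^2$ and taking upper/lower limits, using that $\limsup$ of a quotient is at most the quotient of the $\limsup$ of the numerator by the $\liminf$ of the denominator (and symmetrically), I expect
\begin{equation*}
S \leq 1 + \frac{pS}{L^2}, \qquad L \geq 1 + \frac{pL}{S^2}.
\end{equation*}
Rearranging gives $L^2 S - L^2 \leq pS$, i.e. $L^2(S-1) \leq pS$, and $L S^2 - S^2 \geq pL$, i.e. $S^2(L-1) \geq pL$. (Care is needed here because $f$ is not monotone in its arguments in a single direction; the denominator $y_n^2$ contributes the reversing inequality, so the bounds above must be justified directly from $\liminf$/$\limsup$ arithmetic rather than by invoking a monotone-map theorem.)

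From the pair $L^2(S-1) \leq pS$ and $S^2(L-1) \geq pL$ I would eliminate $p$: the second inequality gives $p \leq S^2(L-1)/L$, and substituting into the first yields $L^2(S-1) \leq S \cdot S^2(L-1)/L$, i.e. $L^3(S-1) \leq S^3(L-1)$. Expanding, $L^3 S - L^3 \leq S^3 L - S^3$, which rearranges to $S^3 - L^3 \leq L S(S^2 - L^2)/1$... more carefully, $L^3 S - S^3 L \leq L^3 - S^3$, i.e. $LS(L^2 - S^2) \leq L^3 - S^3$, i.e. $-LS(S-L)(S+L) \leq -(S-L)(S^2+SL+L^2)$. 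If $S > L$, dividing by the positive quantity $S-L$ gives $LS(S+L) \geq S^2 + SL + L^2$, i.e. $LS^2 + L^2 S \geq S^2 + SL + L^2$. The hard part will be squeezing a contradiction out of this with the constraint $0 < p < 1/2$: I would feed back the bound $p < 1/2$ through $S^2(L-1) \geq pL$ and $L \geq 1$ to get an upper bound on $S$ (for instance $S \leq 1 + p S/L^2 \leq 1 + p < 3/2$ since $L > 1$), and similarly a lower bound on $L$, then check that the polynomial inequality $LS^2 + L^2S \geq S^2 + SL + L^2$ forces $L = S$ on the region $1 \le L \le S < 3/2$ — equivalently that $(S-L)$ must be zero. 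Once $L = S$ is shown, the limit $y_n \to \bar y$ follows, and combined with local asymptotic stability this gives global asymptotic stability on $0 < p < 1/2$.
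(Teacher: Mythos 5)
Your setup coincides with the paper's: both arguments put $L=\liminf_{n\to\infty}y_n$ and $S=\limsup_{n\to\infty}y_n$, use boundedness and $y_n>1$ to get $1<L\le S<\infty$, and extract the same pair of inequalities $S\le 1+pS/L^2$ and $L\ge 1+pL/S^2$. The divergence, and the gap, is in what you do next. By eliminating $p$ you arrive at the $p$-free relation $L^3(S-1)\le S^3(L-1)$ and then hope to force a contradiction from this together with a bound $S<3/2$. There are two problems. First, that bound is derived incorrectly: from $S\le 1+pS/L^2$ and $L>1$ you may only conclude $pS/L^2<pS$, hence $S<1/(1-p)<2$; your step $pS/L^2\le p$ would require $S\le L^2$, which you have not established. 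Second, on the honest region $1\le L\le S<2$ the cubic relation does \emph{not} force $L=S$: for instance $L=1.7$, $S=1.8$ satisfies $L^3(S-1)=3.93\le 4.08=S^3(L-1)$, i.e. $LS(S+L)\ge S^2+SL+L^2$, so the intended polynomial contradiction evaporates. Eliminating $p$ discards exactly the quantitative hypothesis $p<\frac12$ on which the theorem turns.

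The paper keeps $p$ in play: multiplying $L\ge 1+pL/S^2$ by $S$ and $S\le 1+pS/L^2$ by $L$ gives $S+pL/S\le LS\le L+pS/L$, and comparing the outer terms yields
\begin{equation*}
\left( S-L\right) \left( 1-p\left( \tfrac{1}{S}+\tfrac{1}{L}\right) \right)\le 0 .
\end{equation*}
Since $L,S>1$ and $0<p<\frac12$, the second factor is strictly positive, so $S\le L$, hence $S=L$ and the common limit must be $\bar y$. If you insist on your route you must carry $p$ along (e.g. retain $2L^2(S-1)<S$ beside the cubic relation) rather than eliminate it; but the linear factorization above is shorter and already suffices. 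Your side remarks are sound and in fact more careful than the paper: the limsup/liminf inequalities do need direct justification since the map is not monotone in all arguments, and global asymptotic stability does require pairing global attractivity with the local stability of Theorem \ref{localstablethe}, a point the paper leaves implicit.
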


\begin{proof}
From Theorem \ref{boundedsol712}, we know that there exist $I$ and $S$ such
that 
\begin{equation*}
1<I=\underset{n\rightarrow \infty }{\lim }\inf y_{n}\leq S=\underset{%
n\rightarrow \infty }{\lim }\sup y_{n}.
\end{equation*}%
Hence, we get from Eq.(\ref{eqn2222})%
\begin{equation*}
I\geq 1+p\frac{I}{S^{2}}\text{ and }S\leq 1+p\frac{S}{I^{2}}.
\end{equation*}%
Therefore, we obtain that 
\begin{equation*}
S+p\frac{I}{S}\leq IS\leq I+p\frac{S}{I}.
\end{equation*}%
Thus, we have 
\begin{equation*}
\left( S-I\right) \left( 1-p\left( \frac{1}{S}+\frac{1}{I}\right) \right)
\leq 0.
\end{equation*}%
From $S\geq I>1$ and $0<p<\frac{1}{2}$, we also get 
\begin{equation*}
1-p\left( \frac{1}{S}+\frac{1}{I}\right) >0.
\end{equation*}%
So, we have $S\leq I$ which the result follows. Hence, the equilibrium point 
$\bar{y}$ of Eq.(\ref{eqn2222}) is globally asymptotically stable if $0<p<%
\frac{1}{2}.$
\end{proof}

\begin{conjecture}
Many numerical simulations show that If $\frac{1}{2}\leq p<\frac{3}{4}$,
then the equilibrium point $\bar{y}$ of Eq.(\ref{eqn2222}) is globally
asymptotically stable.
\end{conjecture}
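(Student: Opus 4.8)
The plan is to prove that $\bar y$ is a global attractor for the whole range $0<p<\tfrac34$ — which contains the interval $\tfrac12\le p<\tfrac34$ in question — and then to combine this with the local asymptotic stability already established in Theorem \ref{localstablethe}. The tool will be the mixed-monotone (``$m$--$M$'') embedding method. Write $f(u,v)=1+p\,v/u^{2}$, so that $f$ is decreasing in $u$ and increasing in $v$. First I would isolate an invariant compact box. For $0<p<1$, Theorem \ref{boundedsol712} already gives $1<y_{n}\le \frac{1}{1-p}$ eventually; this can be sharpened to $I:=\liminf_{n\to\infty}y_{n}>1$, because $y_{n_{k}}\to 1$ would force $p\,y_{n_{k}-m-1}/y_{n_{k}-1}^{2}\to 0$, which is impossible when $1<y_{j}\le\frac{1}{1-p}$. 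Moreover $f$ maps $J:=[1,\tfrac{1}{1-p}]$ into itself: on $J\times J$ its maximum is $f(1,\tfrac{1}{1-p})=\tfrac{1}{1-p}$ and its minimum is $f(\tfrac{1}{1-p},1)=1+p(1-p)^{2}>1$. Hence every solution eventually lies in a compact interval on which $f$ is a continuous mixed-monotone self-map.

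Next I would invoke the higher-order monotone-map theorem for $y_{n+1}=f(y_{n},y_{n-m})$ in the form valid when $f$ is decreasing in the near variable and increasing in the far variable (see \cite{Kulenovic2002}, \cite{Camouzis2008}): on the invariant interval $J$, $\bar y$ attracts every solution provided the system
\begin{equation*}
M=f(m,M)=1+\frac{pM}{m^{2}},\qquad m=f(M,m)=1+\frac{pm}{M^{2}}
\end{equation*}
has no solution with $m\ne M$. This is where the threshold $\tfrac34$ enters. Multiplying these equations by $m^{2}$ and $M^{2}$ respectively and writing $s=m+M$, $q=mM$, subtraction gives $q=s-p$ and addition gives $q(s+2)=s(s+p)$ whenever $m\ne M$; eliminating $q$ yields $s(1-p)=p$, hence $s=\frac{p}{1-p}$ and $q=\frac{p^{2}}{1-p}$. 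Thus $m$ and $M$ would be the two roots of $t^{2}-\frac{p}{1-p}\,t+\frac{p^{2}}{1-p}=0$, whose discriminant is
\begin{equation*}
\frac{p^{2}}{(1-p)^{2}}-\frac{4p^{2}}{1-p}=\frac{p^{2}(4p-3)}{(1-p)^{2}},
\end{equation*}
which is negative exactly when $p<\tfrac34$. So for $0<p<\tfrac34$ there is no real pair with $m\ne M$, the system forces $m=M$, and then $m=1+p/m$ gives $m=\bar y$. Global attractivity on $J$ follows, and with Theorem \ref{localstablethe} we obtain global asymptotic stability for all $0<p<\tfrac34$, in particular for $\tfrac12\le p<\tfrac34$.

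The step I expect to be the genuine obstacle is the second one: justifying the monotone-map reduction for delay $m\ge 2$. The clean ``no prime period two'' version of the theorem is exactly the $m=1$ case; for $m\ge2$ I would run a full limiting sequences argument — pass to a subsequence along which $S:=\limsup_{n\to\infty}y_{n}$ is attained together with all shifted limits, extract a full solution of Eq.(\ref{eqn2222}) with all terms in $[I,S]$ and one term equal to $S$, and use the mixed monotonicity of $f$ to upgrade the one-sided relations $S\le 1+pS/I^{2}$ and $I\ge 1+pI/S^{2}$ (the ``shadow'' of the system already used in the $p<\tfrac12$ proof) to the equalities $M=f(m,M)$, $m=f(M,m)$ with $(m,M)=(I,S)$. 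One must also keep track of the parity of $m$: for $m$ even the relevant limiting configuration collapses to a prime period-two solution, already excluded by the no two-periodic-solution result above, whereas for $m$ odd one is led to precisely the $(m,M)$-system solved here. Assembling these pieces, rather than the (routine) algebra, is the delicate part, and is presumably why the authors left this statement as a conjecture.
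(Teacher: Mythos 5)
The statement you were asked to prove is left in the paper as a \emph{conjecture}: the authors offer no proof, only numerical evidence. So there is no argument of theirs to compare against; what you have written is a genuine proof strategy, and in my assessment it is essentially sound and would in fact settle the conjecture (indeed for the whole range $0<p<\tfrac34$, matching the local-stability bound of Theorem \ref{localstablethe}). Your algebra is correct: the system $M=1+pM/m^{2}$, $m=1+pm/M^{2}$ forces $m+M=\tfrac{p}{1-p}$, $mM=\tfrac{p^{2}}{1-p}$ when $m\neq M$, and the discriminant $p^{2}(4p-3)/(1-p)^{2}$ is negative precisely for $p<\tfrac34$, which is exactly where the threshold comes from (at $p=\tfrac34$ the double root $m=M=\tfrac32$ coincides with $\bar y$, consistent with the loss of local stability there). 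The one point where you are \emph{too} pessimistic is the step you single out as the ``genuine obstacle'': the mixed-monotone convergence theorem for $y_{n+1}=f(y_{n},y_{n-m})$ does not require $m=1$, a parity split, or a full limiting-sequences argument. On an invariant interval $[a,b]$ one simply iterates $M_{i+1}=f(m_{i},M_{i})$, $m_{i+1}=f(M_{i},m_{i})$ starting from $M_{0}=b$, $m_{0}=a$; mixed monotonicity gives $m_{i}\uparrow m\leq M\downarrow M_{i}$ with $(m,M)$ solving your system, and the trapping $m_{i}\leq y_{n}\leq M_{i}$ for all large $n$ holds for any delay. This is the standard higher-order $M$--$m$ theorem (see \cite{Camouzis2008}), so the parity discussion and the appeal to the no-period-two result are unnecessary. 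Two small housekeeping items remain: since Theorem \ref{boundedsol712} only gives $\limsup_{n\to\infty}y_{n}\leq\tfrac{1}{1-p}$, you should run the argument on $[1,\tfrac{1}{1-p}+\varepsilon]$, which is still invariant because the maximum of $f$ there is $\tfrac{1}{1-p}+p\varepsilon<\tfrac{1}{1-p}+\varepsilon$; and you should say explicitly that global attractivity together with Theorem \ref{localstablethe} yields global asymptotic stability. With those details filled in, your argument upgrades the conjecture to a theorem.
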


\section{Rate of Convergence of Eq.(\protect\ref{eqn2222})}

Here, we investigate the rate of convergence of solutions of Eq.(\ref%
{eqn2222}).

\begin{theorem}
Let $\lambda _{j}$ be roots of characteristic equation (\ref%
{characteris5mby68d}) where $j\in \left\{ 1,\cdots ,k\right\} $. Then, every
solution of Eq.(\ref{eqn2222}) ensures the following relations:%
\begin{equation*}
\underset{n\rightarrow \infty }{\lim }\left\vert \frac{y_{n+1}-\bar{y}}{%
y_{n}-\bar{y}}\right\vert =\left\vert \lambda _{j}\right\vert ,
\end{equation*}%
and 
\begin{equation*}
\underset{n\rightarrow \infty }{\lim }\sup \left( \left\vert y_{n}-\bar{y}%
\right\vert \right) ^{1/n}=\left\vert \lambda _{j}\right\vert .
\end{equation*}
\end{theorem}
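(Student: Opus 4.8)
The plan is to recast the convergence statement as an application of the Perron-type theorems (Poincar\'e's Theorem and Theorem \ref{rateofpoinc54bf5}) quoted in the introduction. First I would set $e_n = y_n - \bar y$ and linearize Eq.(\ref{eqn2222}) along a convergent solution. Writing $y_{n+1} = f(y_n,\dots,y_{n-m})$ with $f(u_0,\dots,u_m)=1+p\,u_m/u_0^2$, a first-order Taylor expansion about $(\bar y,\dots,\bar y)$ gives
\begin{equation*}
e_{n+1} = -\frac{2p}{\bar y^2}\,e_n + \frac{p}{\bar y^2}\,e_{n-m} + (\text{higher order in the } e\text{'s}),
\end{equation*}
so that $e_n$ satisfies a linear difference equation of order $m+1$ with variable coefficients
\begin{equation*}
e_{n+1} + p_1(n)\,e_n + p_2(n)e_{n-1} + \cdots + p_{m+1}(n)\,e_{n-m} = 0,
\end{equation*}
where $p_1(n)\to 2p/\bar y^2$, $p_{m+1}(n)\to -p/\bar y^2$, and all the intermediate $p_i(n)\to 0$ as $n\to\infty$ (this uses that $y_n\to\bar y$, which is guaranteed in the stability range by the preceding theorem). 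The limiting equation is exactly the linearized equation (\ref{linearizedeqlastngy4}), whose characteristic polynomial is (\ref{characteris5mby68d}).

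The main work is making the error terms precise: I would show that the coefficients $p_i(n)$ can be chosen as explicit rational expressions in $y_n,\dots,y_{n-m}$ that converge to the stated limits. Concretely, $f(u_0,\dots,u_m)-\bar y$ can be written as a linear combination $\sum_{i} a_i(u_0,\dots,u_m)(u_i-\bar y)$ where each $a_i$ is continuous near the equilibrium and $a_i\to q_{m-i}$; for this equation one can do it by hand, e.g. $1+p\,u_m/u_0^2 - \bar y = \frac{p}{u_0^2}(u_m-\bar y) - \frac{p(\bar y+u_0)}{\bar y^2 u_0^2}\,\bar y\,(u_0-\bar y)$ after using $\bar y = 1 + p/\bar y$, so the coefficients are $p/y_n^2 \to p/\bar y^2$ and $-p(\bar y+y_n)/(\bar y^2 y_n^2)\to -2p/\bar y^2$, with zeros attached to the missing delays. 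This verifies hypothesis (\ref{rateoflimit}).

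With (\ref{rateoflimit}) established, the second conclusion,
\begin{equation*}
\limsup_{n\to\infty}\left(\left|y_n-\bar y\right|\right)^{1/n} = |\lambda_j|,
\end{equation*}
follows immediately from Theorem \ref{rateofpoinc54bf5} applied to the sequence $e_n$ (the case $e_n=0$ eventually being the trivial one where the solution is the equilibrium). For the first conclusion,
\begin{equation*}
\lim_{n\to\infty}\left|\frac{y_{n+1}-\bar y}{y_n-\bar y}\right| = |\lambda_j|,
\end{equation*}
I would invoke Poincar\'e's Theorem; here the ratio $y_{n+1}-\bar y$ over $y_n-\bar y$ is exactly $e_{n+1}/e_n$, so the theorem gives $e_{n+1}/e_n\to\lambda_j$ for some index $j$, hence the stated limit of absolute values. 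The one technical caveat is that Poincar\'e's Theorem as quoted requires the roots of (\ref{characteris5mby68d}) to have pairwise distinct moduli; strictly this hypothesis should be noted (or one restricts to that generic case), and it is the only real obstacle to a completely clean statement — everything else is the routine Taylor-expansion bookkeeping sketched above. I would close by remarking that $e_n\to 0$ already forces $|\lambda_j|<1$, consistent with the local stability result.
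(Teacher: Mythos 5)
Your proposal is correct and follows essentially the same route as the paper: the identical exact decomposition $y_{n+1}-\bar y=-\frac{p(y_n+\bar y)}{\bar y\,y_n^2}(y_n-\bar y)+\frac{p}{y_n^2}(y_{n-m}-\bar y)$, passage to the limiting coefficients $-2p/\bar y^2$ and $p/\bar y^2$ via convergence of $y_n$ to $\bar y$, and then the quoted Poincar\'e/Perron--Pituk theorems. If anything you are more careful than the paper, which does not mention the distinct-moduli hypothesis needed for Poincar\'e's theorem or the trivial case $e_n=0$.
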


\begin{proof}
According to Eq.(\ref{eqn2222}), we have that 
\begin{eqnarray*}
y_{n+1}-\bar{y} &=&\left( 1+p\frac{y_{n-m}}{y_{n}^{2}}\right) -\left( 1+p%
\frac{\bar{y}}{\bar{y}^{2}}\right) \\
&=&-\frac{p\left( y_{n}+\bar{y}\right) }{\bar{y}y_{n}^{2}}\left( y_{n}-\bar{y%
}\right) +\frac{p}{y_{n}^{2}}\left( y_{n-m}-\bar{y}\right) .
\end{eqnarray*}%
Now, we consider $e_{n}=y_{n}-\bar{y}$. Then, we obtain 
\begin{equation*}
e_{n+1}+p_{n}e_{n}+q_{n}e_{n-m}=0,
\end{equation*}%
such that 
\begin{equation*}
p_{n}=-\frac{p\left( y_{n}+\bar{y}\right) }{\bar{y}y_{n}^{2}},
\end{equation*}%
and 
\begin{equation*}
q_{n}=\frac{p}{y_{n}^{2}}.
\end{equation*}%
Therefore, we get from globally asymptotic stability%
\begin{equation*}
\underset{n\rightarrow \infty }{\lim }p_{n}=-\frac{2p}{\bar{y}^{2}},
\end{equation*}%
and 
\begin{equation*}
\underset{n\rightarrow \infty }{\lim }q_{n}=\frac{p}{\bar{y}^{2}}.
\end{equation*}%
So, the proof is completed.
\end{proof}

\end{document}